\crefname{equation}{}{}
\numberwithin{equation}{section}
\newtheorem{theorem}{Theorem}[section]
\newtheorem{lemma}[theorem]{Lemma}
\crefname{claim}{Claim}{Claims}
\newtheorem{corollary}[theorem]{Corollary}
\newtheorem*{question*}{Question}
\theoremstyle{definition}
\newtheorem{question}[theorem]{Question}
\newtheorem*{definition*}{Definition}
\theoremstyle{remark}
\newtheorem*{remark}{Remark}
\newcommand{\sang}[1]{\langle #1 \rangle}
\newcommand{\mb}{\mathbb}
\newcommand{\mbm}{\mathbbm}
\newcommand{\mc}{\mathcal}
\newcommand{\on}{\operatorname}
\begin{document}

\begin{frontmatter}[classification=text]
%% EDITOR: this will force the keywords to appear right after the Abstract.
%%   If the abstract is too long and would force the keywords off the
%%   front page, please comment out % [classification=text] above
%%   This way the keywords will be floated on the bottom of the first page
%%   even though the Abstract spills over to the next page.

%%% AUTHOR: Title goes here.  This line is optional.  You must use it
%%   if title has footnote attached or requires nontrivial typesetting,
%%   e.g., inclusion of linebreaks to force nice layout.
\title{Anticoncentration versus the Number of Subset Sums} %% please capitalize all significant words

%%% AUTHOR:
%%% List all authors. If you wish, place grant acknowledgements in \thanks.
%%% In brackets include a short tag for each author.
\author[vj]{Vishesh Jain}
\author[as]{Ashwin Sah\thanks{Supported by NSF Graduate Research Fellowship Program DGE-1745302}}
\author[ms]{Mehtaab Sawhney\thanks{Supported by NSF Graduate Research Fellowship Program DGE-1745302}}

%%% AUTHOR: Abstract goes here
\begin{abstract}
Let $\vec{w} = (w_1,\dots, w_n) \in \mb{R}^{n}$. We show that for any $n^{-2}\le\epsilon\le 1$, if
\[\#\{\vec{\xi} \in \{0,1\}^{n}: \langle \vec{\xi}, \vec{w} \rangle = \tau\} \ge 2^{-\epsilon n}\cdot 2^{n}\]
for some $\tau \in \mathbb{R}$, then 
\[\#\{\langle \vec{\xi}, \vec{w} \rangle : \vec{\xi} \in \{0,1\}^{n}\} \le 2^{O(\sqrt{\epsilon}n)}.\]
This exponentially improves the $\epsilon$ dependence in a recent result of Nederlof, Pawlewicz, Swennenhuis, and W{\k{e}}grzycki and leads to a similar improvement in the parameterized (by the number of bins) runtime of bin packing. 
\end{abstract}
\end{frontmatter}

%%% AUTHOR: body of paper starts here

\section{Introduction} \label{sec:introduction}
For $\vec{w} := (w_1,\dots, w_n) \in \mb{R}^{n}$ and a real random variable $\xi$, recall that the L\'evy concentration function of $\vec{w}$ with respect to $\xi$ is defined for all $r\ge 0$ by
\[\mc{L}_{\xi}(\vec{w},r) = \sup_{\tau \in \mb{R}}\mb{P}[|w_1 \xi_1 + \dots + w_n \xi_n - \tau| \le r],\]
where $\xi_1,\dots, \xi_n$ are i.i.d.\ copies of $\xi$. In combinatorial settings (where $\vec{w} \in \mb{Z}^{n}$) a particularly natural and interesting case is when $r = 0$ and $\xi$ is a Bernoulli random variable, i.e., $\xi = 0$ with probability $1/2$ and $\xi = 1$ with probability $1/2$. For lightness of notation, we will denote this special case by
\[\rho(\vec{w}) = \mc{L}_{\on{Ber}(1/2)}(\vec{w},0) = \sup_{\tau\in\mb{R}}\mb{P}[\sang{\vec{w},\vec{\xi}}=\tau].\]
In this note, we study the following question.
\begin{question}
\label{question}
For a vector $\vec{w} = (w_1,\dots, w_n) \in \mb{R}^{n}$ with $\rho(\vec{w}) \ge \rho$, how large can the range
\[\mc{R}(\vec{w}) = \{w_1\xi_1 + \dots + w_n \xi_n: \xi_i \in \{0, 1\}\}\]
be?
\end{question}
The two extremal examples here are $\vec{w} = (0,0,\dots,0)$, which corresponds to $\rho(\vec{w}) = 1$, $|\mc{R}(\vec{w})| = 1$ and $\vec{w} = (1,10,\dots,10^{n-1})$, which corresponds to $\rho(\vec{w}) = 2^{-n}$, $|\mc{R}(\vec{w})| = 2^{n}$. Motivated by these examples, one may ask if there is a smooth trade-off between $\rho(\vec{w})$ and $|\mc{R}(\vec{w})|$. This turns out not to be the case. Indeed, for any $\epsilon > 0$, Wiman \cite{W17} gave an example of a $\vec{w} \in \mb{Z}^{n}$ for which $|\mc{R}(\vec{w})| \ge 2^{(1-\epsilon)n}$ and $\rho(\vec{w}) \ge 2^{-0.7447n}$. At the other end of the spectrum, when $\rho(\vec{w}) \ge 2^{-\epsilon n}$, the so-called inverse Littlewood--Offord theory \cite{RV08, TV09, NV11} \emph{heuristically} suggests that $\vec{w}$ is essentially contained in a low-rank generalized arithmetic progression of `small' volume so that $|\mc{R}(\vec{w})|$ is also `small'. However, the number of `exceptional elements' in the inverse Littlewood--Offord theorems \cite{RV08, TV09, NV11} is unfortunately too large to be able to rigorously establish such a statement.   

Nevertheless, in a recent work on the parameterized complexity of the bin packing problem (see \cref{sub:application}), Nederlof, Pawlewics, Swennenhuis and W{\k{e}}grzycki \cite{NPSW20} showed that for any $\epsilon > 0$, 
\[\rho(\vec{w}) \ge 2^{-\epsilon n} \implies |\mc{R}(\vec{w})| \le 2^{\delta(\epsilon)n},\]
where
\begin{equation}
\label{eqn:delta-old}
\delta(\epsilon) = O\left(\frac{\log\log(\epsilon^{-1})}{\sqrt{\log(\epsilon^{-1})}}\right).
\end{equation}
In particular, $\delta(\epsilon) \to 0$ as $\epsilon \to 0$. Moreover, we must have $\delta(\epsilon) \ge (2-o(1))\epsilon$, as can be seen by considering
\[\vec{w} = (C_1,\dots,C_1, C_2,\dots, C_2,\dots, C_{n/k},\dots,C_{n/k}) \in \mb{R}^{n},\]
where each $C_i$ is repeated $k$ times, and $C_i$ is sufficiently small compared to $C_{i+1}$ for all $i$. Indeed, for such $\vec{w}$, we have $\rho(\vec{w}) = 2^{-(\frac{1}{2}+o_k(1))\frac{n}{k}\log_2{k}}$ while $|\mc{R}(\vec{w})| \le 2^{(1+o_k(1))\frac{n}{k}\log_2{k}}$.

We conjecture that this example is essentially the worst possible, so that $\delta(\epsilon) \le 2\epsilon$. We are able to show that
\begin{equation}
\label{eqn:delta-new}
\delta(\epsilon) = O(\sqrt{\epsilon}),
\end{equation}
thereby obtaining an exponential improvement over \cref{eqn:delta-old}. More precisely, 
\begin{theorem}\label{thm:main}
Let $\epsilon>0$. For any $n\ge\epsilon^{-1/2}$ and any $\vec{w}\in\mb{R}^n$ satisfying $\rho(\vec{w})\ge \exp(-\epsilon n)$, we have
\[|\mc{R}(\vec{w})|\le\exp(C_{\ref{thm:main}}\epsilon^{1/2}n),\]
where $C_{\ref{thm:main}}$ is an absolute constant. 
\end{theorem}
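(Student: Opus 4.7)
The plan is to convert the anticoncentration hypothesis into arithmetic structure for $\vec{w}$ via Fourier analysis, and then bound $|\mc{R}(\vec{w})|$ using that structure. First I would reduce to the integer case: fix $\tau$ witnessing $\rho(\vec{w})\ge e^{-\epsilon n}$ and let $F\subseteq\{0,1\}^n$ be the corresponding fiber. Every element of $F-F$ lies in the kernel of the linear functional $\sang{\vec{w},\cdot}$, and a standard perturbation/limiting argument lets one replace $\vec{w}$ by a rescaled integer vector without decreasing $\rho$ or $|\mc{R}|$. For $\vec{w}\in\mb{Z}^n$, orthogonality yields
\[\rho(\vec{w})\le\int_0^1\prod_{i=1}^n|\cos(\pi w_i\theta)|\,d\theta,\]
so the superlevel set $T:=\{\theta\in[0,1]:\prod_i|\cos(\pi w_i\theta)|\ge e^{-\epsilon n}/2\}$ has Lebesgue measure $|T|\ge e^{-\epsilon n}/2$. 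Using $\log\sec(\pi x)\ge c\|x\|_{\mb{R}/\mb{Z}}^2$ pointwise and averaging over $T$ gives $\sum_i\mb{E}_{\theta\in T}\|w_i\theta\|_{\mb{R}/\mb{Z}}^2\le C\epsilon n$.

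The central step is then structural: convert this ``simultaneous near-integer'' data into an arithmetic progression (or short GAP) containing most of $\vec{w}$. Intuitively, for a typical $\theta\in T$, many coordinates $w_i\theta$ are close to integers simultaneously, forcing the good $w_i$'s to sit in an approximate arithmetic progression of common difference $1/\theta$. I would aim to extract a single resolution $\theta_0\in T$, or more robustly a common denominator from all of $T$ via a Croot--Sisask/Freiman-type almost-periodicity input, so that an $(1-O(\sqrt{\epsilon}))n$ fraction of the $w_i$'s lie in an arithmetic progression of length $e^{O(\sqrt{\epsilon})n}$. The $\sqrt{\epsilon}$ rate emerges from optimally balancing two parameters against the total $\ell^2$-budget $C\epsilon n$: the number of ``bad'' indices (those for which $\|w_i\theta\|$ is frequently large on $T$) against the sharpness of the integer approximation for the good coordinates.

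Granted such a structural statement, the range is counted directly: the $O(\sqrt{\epsilon})n$ bad coordinates multiply $|\mc{R}|$ by at most $2^{O(\sqrt{\epsilon})n}$, while the good coordinates, confined to an arithmetic progression of length $e^{O(\sqrt{\epsilon})n}$, contribute at most $n$ times that length, yielding $|\mc{R}(\vec{w})|\le e^{C\sqrt{\epsilon}n}$. The main obstacle is the structural extraction itself: a naive inverse Littlewood--Offord argument allows too many exceptional coordinates to achieve the $\sqrt{\epsilon}$ rate, and the method of \cite{NPSW20} loses a $\log\log(\epsilon^{-1})/\sqrt{\log(\epsilon^{-1})}$ factor at exactly this step. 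The improvement requires leveraging the full measure of $T$, rather than working at a single $\theta$, so that no logarithmic loss is incurred in the passage from the Fourier-analytic level set to a common denominator; tracking this extra information carefully is where the improvement over \cref{eqn:delta-old} is gained.
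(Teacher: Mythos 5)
There is a genuine gap, and it sits exactly where you place it: the ``central step'' converting the Fourier-analytic level-set information into an arithmetic progression containing all but $O(\sqrt{\epsilon})n$ of the coordinates, with length $e^{O(\sqrt{\epsilon})n}$, is not proved, and it is precisely the inverse Littlewood--Offord statement that the paper's introduction flags as \emph{not} rigorously available in this regime: the known inverse theorems (Rudelson--Vershynin, Tao--Vu, Nguyen--Vu) have exceptional sets and GAP-volume losses that are too large at concentration level $e^{-\epsilon n}$ to yield $\delta=O(\sqrt{\epsilon})$, and your appeal to ``Croot--Sisask/Freiman-type almost-periodicity, leveraging the full measure of $T$'' is a hope rather than an argument --- note that $|T|$ is only $e^{-\epsilon n}$, and $T$ is a union of tiny intervals clustered around many different rationals, so there is no obvious common denominator to extract. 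Even the single-frequency version fails on its own terms: from $\sum_i\|w_i\theta_0\|_{\mb{R}/\mb{Z}}^2\le C\epsilon n$, Markov gives that the good $w_i$ are within $O(\epsilon^{1/4}/\theta_0)$ of \emph{some} integer multiple of $1/\theta_0$, but nothing bounds the size of those multiples, i.e.\ the length of the progression, which is the quantity your final count needs. Moreover, approximate containment in an AP is not sufficient for the conclusion: $|\mc{R}(\vec{w})|$ counts \emph{exact} distinct subset sums, and the approximation errors $\delta_i$ can themselves generate exponentially many distinct values unless further structure is extracted, so the last counting paragraph does not follow from the structural statement as you have formulated it.

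For comparison, the paper avoids inverse Littlewood--Offord entirely. It fixes a set $A$ of unique preimages of $\mc{R}(\vec{w})$ and the fiber $B$ of the optimal $\tau$, uses the injectivity of $A\times(k\cdot B)\to A+k\cdot B$ (\cref{lem:injective}), replaces the $k$-fold sum of uniform-on-$B$ vectors by $\on{Bin}(k)^{\otimes n}$ at a cost of $e^{k\epsilon n}$ (a density/change-of-measure step using only $|B|\ge e^{-\epsilon n}2^n$), and then removes the unknown shift $\vec{a}(\vec{x})$ via the moment estimate of \cref{lem:sup-ratio} on the ratio $\mb{P}[\vec{b}=\vec{x}-\vec{a}]/\mb{P}[\vec{b}=\vec{x}]$. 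This yields the self-referential inequality $\delta\le C(k\epsilon+k^{-1}+\sqrt{\delta/k})$, and choosing $k\approx\epsilon^{-1/2}$ gives $\delta=O(\sqrt{\epsilon})$. If you want to salvage your route, you would need to first prove a new inverse-type theorem with exceptional set $O(\sqrt{\epsilon})n$ and exact (not approximate) structural control --- which is an open problem, not a standard input.
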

We prove this theorem in \cref{sec:proof}.

\subsection{Application to bin packing}
\label{sub:application}
The bin packing problem is a classic NP-complete problem whose decision version may be stated as follows: given $n$ items with weights $w_1,\dots, w_n \in [0,1]$ and $m$ bins, each of capacity $1$, is there a way to assign the items to the bins without violating the capacity constraints? Formally, is there a map $f:[n] \to [m]$ such that 
$\sum_{i \in f^{-1}(j)}w_i \le 1$ for all $j \in [m]$?

Bj\"orklund, Husfeldt, and Koivisto \cite{BHK09} provided an algorithm for solving bin packing in time $\tilde{O}(2^{n})$ where the tilde hides polynomial factors in $n$. It is natural to ask whether the base of the exponent may be improved at all i.e.\ is there a (possibly randomized) algorithm to solve bin packing in time $\tilde{O}(2^{(1-\epsilon)n})$ for some absolute constant $\epsilon > 0$?

In recent work, Nederlof, Pawlewics, Swennenhuis and W{\k{e}}grzycki \cite{NPSW20} showed that this is true provided that the number of bins $m$ is fixed. More precisely, they showed that there exists a function $\sigma: \mb{N} \to \mb{R}^{>0}$ and a randomized algorithm for solving bin packing which, on instances with $m$ bins, runs in time $\tilde{O}(2^{(1-\sigma(m))n})$, where $\tilde{O}$ hides polynomials in $n$ as well as exponential factors in $m$. Their analysis, which crucially relies on \cref{eqn:delta-old}, gives a very small value of $\sigma(m)$ satisfying
\begin{equation}
    \label{eqn:sigma-old}
    \sigma(m) \le 2^{-m^9}. 
\end{equation}
Using \cref{thm:main} instead of \cref{eqn:delta-old} in a black-box manner in the analysis of \cite{NPSW20}, we exponentially improve the bound on $\sigma(m)$.
\begin{corollary}
\label{cor:bin-packing}
With notation as above, the randomized algorithm of \cite{NPSW20} solves bin packing instances with $m$ bins in time $\tilde{O}(2^{(1-\sigma(m))n})$ with high probability, for $\sigma \colon \mb{N} \to \mb{R}^{> 0}$ satisfying
\begin{equation}
    \label{eqn:sigma-new}
    \sigma(m) = \tilde{\Omega}(m^{-12}),
\end{equation}
where $\tilde{\Omega}$ hides logarithmic factors in $m$.
\end{corollary}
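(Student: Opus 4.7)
The plan is a routine black-box substitution into the analysis of \cite{NPSW20}. I will not redevelop their algorithm; instead, I will track how the quantity $\delta(\epsilon)$, for which \cite{NPSW20} establish the bound \cref{eqn:delta-old}, propagates through their runtime analysis, and replace it everywhere with the improved bound $\delta(\epsilon) = O(\sqrt{\epsilon})$ supplied by \cref{thm:main}.

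The first step is to locate in \cite{NPSW20} the expression in which the total runtime is written as a function of a free parameter $\epsilon$ (a threshold for anticoncentration) and the number of bins $m$. Their algorithm splits according to whether each weight class is anticoncentrated at scale $2^{-\epsilon n}$: on the anticoncentrated side one pays roughly $2^{\delta(\epsilon)n}$ from enumerating the range $\mc{R}(\vec{w})$ via dynamic programming, while on the non-anticoncentrated side one gains a savings that is polynomial in $\epsilon$ and inverse-polynomial in $m$. The optimal $\sigma(m)$ is obtained by balancing these two contributions over $\epsilon$, modulo polynomial factors in $n$ and $m$-dependent factors absorbed into the $\tilde{O}$ notation.

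With the old bound \cref{eqn:delta-old}, $\delta(\epsilon)$ decays only inverse-polylogarithmically in $1/\epsilon$, so matching it against the polynomial-in-$m$ gain forces $\epsilon$ to be doubly-exponentially small in $m$, yielding $\sigma(m) \le 2^{-m^9}$. With $\delta(\epsilon) = O(\sqrt{\epsilon})$, the same balance becomes entirely polynomial: taking $\epsilon \asymp m^{-c}$ for appropriate $c$ matches $\sqrt{\epsilon} \asymp m^{-c/2}$ against the polynomial savings on the other side, and the resulting $\sigma(m)$ is itself polynomial in $m$. Tracking the constants through the optimization in \cite{NPSW20} yields $\sigma(m) = \tilde{\Omega}(m^{-12})$, with the logarithmic factors (from the various polynomial prefactors in color coding, random partitioning, and dynamic programming) absorbed into $\tilde{\Omega}$.

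The main (and sole) obstacle is bookkeeping: one must carefully track the polynomial-in-$m$ prefactors from the random partitioning, color-coding, and dynamic-programming stages of \cite{NPSW20} to confirm that re-optimization indeed produces the exponent $12$. No conceptual ingredient beyond \cref{thm:main} is needed; this is precisely what is meant by using \cref{thm:main} ``in a black-box manner'' in place of \cref{eqn:delta-old}.
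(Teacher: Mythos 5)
Your overall strategy is the right one and is exactly what the paper does: treat \cref{thm:main} as a black-box replacement for \cref{eqn:delta-old} inside the runtime analysis of \cite{NPSW20} and re-run their bookkeeping. The gap is that the proposal never actually produces the exponent $12$; it asserts that "tracking the constants" yields $\tilde{\Omega}(m^{-12})$ and then explicitly defers the verification ("one must carefully track\ldots to confirm that re-optimization indeed produces the exponent 12"). Since the only content of \cref{cor:bin-packing} beyond \cite{NPSW20} is this exponent, the proof is incomplete as written. The missing chain is short but specific: in \cite[Section~3.6]{NPSW20} the function $f_C(m)$ is $\tilde{\Theta}(m^{-2})$, which forces the range-exponent parameter $\delta$ there to be $\tilde{\Theta}(m^{-3})$; by \cref{thm:main} the anticoncentration threshold needed for range $2^{\delta n}$ satisfies $\varepsilon(\delta) = O(\delta^{2})$; and the savings function $f_B(\delta)$ in \cite[Section~3.4]{NPSW20}, which is quadratic in $\varepsilon(\delta)$, then becomes $\tilde{O}(\delta^{4}) = \tilde{\Omega}(m^{-12})$, which is the claimed $\sigma(m)$. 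Without exhibiting these three facts (or at least citing the specific functions $f_C$, $\varepsilon(\delta)$, $f_B$ and their orders), the statement $\sigma(m)=\tilde{\Omega}(m^{-12})$ is not justified.

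A secondary, smaller issue: your description of the mechanism as a free optimization over the anticoncentration threshold $\epsilon$, balancing a $2^{\delta(\epsilon)n}$ cost against an $m$-dependent savings, does not quite match the structure of \cite{NPSW20}. There, the range exponent $\delta$ is not a free parameter to be balanced; it is pinned down by $f_C(m)$ coming from a different branch of the algorithm, and the threshold $\varepsilon$ is then chosen as a function of that $\delta$, with the final savings $f_B$ determined by $\varepsilon(\delta)$. This mischaracterization is harmless only because you defer entirely to the source, but it is precisely at this point that the exponents $3$ and $4$ (hence $12 = 3\cdot 4$) arise, so getting the dependency structure right is what turns your outline into a proof. (As the paper notes, the conjectural bound $\delta = O(\epsilon)$ would give $f_B(\delta) = \tilde{O}(\delta^{2})$ and hence $\sigma(m) = \tilde{\Omega}(m^{-6})$, which is a useful sanity check on the arithmetic.)
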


\begin{remark}
This follows by noting that the function $f_C(m)$ in \cite[Section~3.6]{NPSW20} is $\tilde{\Theta}(m^{-2})$ so that $\delta$ in \cite[Section~3.6]{NPSW20} is $\tilde{\Theta}(m^{-3})$. With \cref{thm:main}, the function $\varepsilon(\delta)$ in the runtime analysis of \cite[Section~3.4]{NPSW20} satisfies $\varepsilon(\delta) = O(\delta^2)$. Therefore, the function $f_B(\delta)$ in the same section is $\tilde{O}(\delta^{4})$, which is $\tilde{\Omega}(m^{-12})$. Note that if one were able to establish the conjecturally optimal bound $\delta = O(\epsilon)$, this would lead to $f_B(\delta) = \tilde{O}(\delta^{2})$, thereby giving the quadratically better $\sigma(m) = \tilde{\Omega}(m^{-6})$.
\end{remark}

\subsection{Notation}\label{sub:notation}
We use big-$O$ notation to mean that an absolute multiplicative constant is being hidden. We use $\on{Ber}(1/2)$ to denote the balanced $\{0,1\}$ Bernoulli distribution and $\on{Bin}(k)$ to denote the binomial distribution on $k$ trials with parameter $1/2$. Recall that $\on{Bin}(k)$ is the sum of $k$ independent $\on{Ber}(1/2)$ random variables. Given a distribution $\mu$, we let $\mu^{\otimes n}$ denote the distribution of a random vector with $n$ independent samples from $\mu$ as its coordinates. We also use the following standard additive combinatorics notation: $C+D = \{c+d: c\in C,d\in D\}$ is the sumset (if $C,D$ are subsets of the same abelian group), and for a positive integer $k$, we let $k\cdot C = C+\cdots+C$ ($k$ times) be the iterated sumset. Finally, in some cases we will use the notation $\Sigma\cdot$ or $\int\cdot$ to denote that the expression in the sum or integral is the same as in the previous line to simplify the presentation of long expressions.

\subsection{Outline of the proof}
\label{sub:outline}
As in \cite{NPSW20}, the starting point of our proof is the following observation: let $A$ denote a fixed (but otherwise arbitrary) set of unique preimages for points in $\mc{R}(\vec{w})$ (hence, $|A| = |\mc{R}(\vec{w})|$) and let $B$ denote the the set of preimages of a value $\tau \in \mb{R}$ realising $\rho(\vec{w})$. Then (\cref{lem:injective}) for any $k\geq 1$, the map $A \times (k\cdot B) \to A + k\cdot B$ is a bijection. In particular, if $\vec{a}$ is sampled from the uniform distribution on $A$ and $\vec{b}_1,\dots, \vec{b}_k$ are independently sampled from the uniform distribution on $B$, then
\begin{align*}
    |A| &= |A| \cdot \mb{P}[\vec{a} + \vec{b}_1 + \dots  + \vec{b}_k \in \{0,\dots, k+1\}^{n}]\\
        &= |A| \cdot \sum_{\vec{x} \in \{0,\dots, k+1\}^{n}}\mb{P}[\vec{a} + \vec{b}_1 + \dots + \vec{b}_k = \vec{x}]\\
        &\leq |A| \cdot \sum_{\vec{x} \in \{0,\dots, k+1\}^{n}} \mb{P}[\vec{a} = \vec{a}(\vec{x})] \cdot \mb{P}[\vec{b}_1 + \dots + \vec{b}_k = \vec{x} - \vec{a}(\vec{x})]\\
        &\leq \sum_{\vec{x} \in \{0,\dots, k+1\}^{n}}\mb{P}[\vec{b}_1 + \dots + \vec{b}_k = \vec{x} - \vec{a}(\vec{x})] 
\end{align*}

In \cite{NPSW20}, the largeness of $B$ is exploited by finding, for every $a \in A$, a large subset of $B$ which is `balanced' (in a certain sense) with respect to $a$. Instead, we exploit the largeness of $B$ directly by using the observation that the density of the uniform measure on $B$ with respect to the uniform measure on $\{0,1\}^{n}$ is at most $2^{n}/|B| \leq 2^{\varepsilon n}$. In particular, if we let $\mu_{k}$ denote the measure on $k\cdot B$ induced by the product measure on $B \times \dots \times B$ via the map $(b_1,\dots, b_k) \mapsto b_1+\dots +b_k$ and if we let $\on{Bin}(k)^{\otimes n}$ denote the $n$-fold product of the $\on{Binomial}(k,1/2)$ distribution, then the density of $\mu_{k}$ with respect to $\on{Bin}(k)^{\otimes n}$ is at most $2^{k\varepsilon n}$. This allows us to replace the measure $\mu_{k}$ appearing in the last line of the above equation by $\on{Bin}(k)^{\otimes n}$, at the cost of a factor of $2^{k\varepsilon n}$. Thus,
\[|A| \leq 2^{k\epsilon n}\cdot \sum_{\vec{x} \in \{0,\dots,k+1\}^{n}}\mb{P}_{\vec{x} \sim \on{Bin}(k)^{\otimes n}}[\vec{x} - \vec{a}(\vec{x})]\]

The above expression is still complicated by the presence of the shift $\vec{a}(\vec{x})$, about which we have no information except that it lies in the set $A$. The key technical lemma in the proof is \cref{lem:sup-ratio}, which essentially allows us to remove this shift after paying a factor which depends on $|A|$. Ultimately, this gives an upper bound on the sum in terms of $|A|$ and $k$, which amounts to an upper bound on $|A|$ in terms of $k, \epsilon$, and $|A|$. Optimizing the value of the free parameter $k$ now gives the desired conclusion. 

\section{Proof of \texorpdfstring{\cref{thm:main}}{Theorem 1.2}}
\label{sec:proof}

We begin by recording the following key comparison bound, which will be proved at the end of this section.

\begin{lemma}\label{lem:sup-ratio}
Let $n\ge k \ge C_{\ref{lem:sup-ratio}}$, where $C_{\ref{lem:sup-ratio}}$ is a sufficiently large absolute constant and let $\delta > 0$. For any $A \subseteq \{0,1\}^{n}$ with $|A| \le \exp(\delta n)$, the following holds.  
Let $\vec{x},\vec{b}\sim\on{Bin}(k)^{\otimes n}$ be independent $n$-dimensional random vectors. Then, 
\[\mb{E}_{\vec{x}}\bigg[\sup_{\vec{a}\in A}\frac{\mb{P}_{\vec{b}}[\vec{b} = \vec{x}-\vec{a}]}{\mb{P}_{\vec{b}}[\vec{b} = \vec{x}]}\bigg]\le\exp\left(C_{\ref{lem:sup-ratio}}\left(\frac{1}{k}+\sqrt{\frac{\delta}{k}}\right)n\right).\]
\end{lemma}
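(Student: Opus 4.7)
The plan is to establish a Chernoff-type tail bound via a single-coordinate moment estimate and then union bound over $A$. Since $\vec a \in \{0,1\}^n$, the likelihood ratio factorizes coordinate-wise: using $\binom{k}{x-1}/\binom{k}{x} = x/(k-x+1)$,
\[\frac{\mb{P}_{\vec b}[\vec b = \vec x - \vec a]}{\mb{P}_{\vec b}[\vec b = \vec x]} = \prod_{i:\, a_i = 1} f(x_i), \qquad \text{where } f(x) := \frac{x}{k-x+1},\]
with the convention that any factor with $x_i = 0$ kills the product. Write $Y_{\vec a}(\vec x)$ for this product. The task is then to bound $\mb{E}_{\vec x}[\sup_{\vec a \in A} Y_{\vec a}(\vec x)]$.

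The key technical ingredient I would prove is the single-coordinate sub-Gaussian moment bound
\[\mb{E}_{x \sim \on{Bin}(k)}[f(x)^q] \le \exp(Cq^2/k) \qquad \text{for } q \in [0, c_0\sqrt{k}].\]
The plan is to Taylor-expand $\log f(x) = \log((k/2+y)/(k/2+1-y))$ at $y := x - k/2$, giving $\log f(x) = -2/k + 4y/k + O(1/k^2 + y^3/k^3)$ on the typical range $|y| \le k/4$. Combining with the classical centered-binomial MGF bound $\mb{E}[e^{\lambda y}] = (\cosh(\lambda/2))^k \le \exp(\lambda^2 k/8)$, applied with $\lambda = 4q/k$, yields the sub-Gaussian estimate up to the atypical contribution from $|y| > k/4$, which is crushed by the pointwise bound $f(x)^q \le k^q$ and Hoeffding's inequality $\mb{P}[|y| > k/4] \le 2e^{-k/8}$, and is negligible for $q \le c_0\sqrt{k}$.

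Given the moment estimate, the rest is routine Cram\'er--Chernoff. By independence across coordinates, $\mb{E}[Y_{\vec a}^q] \le \exp(Cq^2 n/k)$ for every $\vec a \in A$, so Markov's inequality gives $\mb{P}_{\vec x}[Y_{\vec a}(\vec x) \ge e^u] \le \exp(Cq^2 n/k - qu)$. Union bounding over $|A| \le e^{\delta n}$ and optimizing $q \in [0, c_0\sqrt{k}]$ produces the Gaussian-type tail
\[\mb{P}_{\vec x}\Big[\sup_{\vec a \in A} Y_{\vec a}(\vec x) \ge e^u\Big] \le \exp(\delta n - u^2 k/(4Cn))\]
in the relevant range; for $u$ beyond the threshold where the unconstrained optimum exceeds $c_0\sqrt{k}$, I would use the endpoint $q = c_0\sqrt{k}$, which gives super-exponential decay and contributes negligibly. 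Writing $\mb{E}[\sup Y] = \int_0^\infty \mb{P}[\sup Y \ge t]\,dt$ and splitting at $u^\star := 2n\sqrt{C\delta/k} + Cn/k$, a standard Gaussian-integral evaluation yields $\mb{E}[\sup Y] \le \exp(C'(1/k + \sqrt{\delta/k})n)$, as desired.

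The main obstacle is the single-coordinate moment bound. While heuristically immediate from Taylor expansion and the local CLT for $\on{Bin}(k)$, the rigorous version requires uniform control over the remainder of the expansion of $\log f$ throughout the typical range (so that all error terms are absorbed into $C/k$ per coordinate), together with a careful check that the atypical tails, where $\log f$ blows up at the endpoints $x \in \{0,k\}$, are genuinely suppressed by the binomial weights across the full range $q \in [0, c_0\sqrt{k}]$. This range is just enough to cover the Chernoff saddle-point $q^\star = O(\sqrt{\delta k})$, which in turn is the source of the $\sqrt{\delta/k}$ term in the final exponent, while the $1/k$ term emerges from the Gaussian tail integration.
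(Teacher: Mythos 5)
Your proposal is correct in outline and follows the same architecture as the paper's proof: factorize the likelihood ratio over coordinates, prove a sub-Gaussian bound on the single-coordinate moments $\mb{E}_{x\sim\on{Bin}(k)}[(x/(k+1-x))^{q}]$, run Chernoff plus a union bound over $A$, and integrate the tail. The differences are in the two technical steps. For the single-coordinate estimate, the paper couples $y=x-k/2$ with signs of i.i.d.\ Gaussians and uses Jensen's inequality to reduce to an exact Gaussian computation, getting $\exp(10\pi s^{2}/k)+2k^{s}(4/5)^{k}$ for all $s\le k/(16\pi)$; your Taylor-plus-$\cosh$ route is more elementary and does work --- note that on $|y|\le k/4$ the quadratic remainder obeys $q y^{2}/k^{2}\le q|y|/(4k)$, so it can be absorbed into the linear exponent before applying the MGF bound, which makes your ``absorbed into $C/k$ per coordinate'' step rigorous; restricting the product to the support of $\vec{a}$ also lets you skip the paper's auxiliary check that the single-coordinate moment is at least $1$. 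The second difference is the moment range, and this is where you should be explicit. The paper takes $s$ as large as $\sim k/\log k$, so the extreme tail (the supremum can be as large as $k^{n}$) is killed with decay $\exp(-kn/(48\pi(\log k)^{2}))$, which beats $e^{tn}$ for all $t\le\log k$ irrespective of constants. You cap $q$ at $c_0\sqrt{k}$ and use the endpoint beyond the Gaussian regime; after reducing to $\delta\le\log 2$ (WLOG since $A\subseteq\{0,1\}^{n}$ --- a reduction you never state but need), the endpoint contribution at the switchover $u_1\asymp n/\sqrt{k}$ is $\exp((\delta-Cc_0^{2}+o(1))n)$, so ``contributes negligibly'' requires $Cc_0^{2}>\log 2$, i.e.\ $c_0$ a not-too-small absolute constant. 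This is fine because your moment bound in fact extends to such $q$ (and beyond), but with a genuinely tiny $c_0$ the far tail would not die, so fix $c_0$ and check this constant. Finally, when $\delta\ll 1/k$ you should first replace $\delta$ by $\Theta(1/k)$ (the paper assumes $\delta\ge 2000/k$) so that the integral past $u^{\star}$ genuinely decays; otherwise crude ``length times maximum'' bounds pick up polynomial factors in $n$, which are not absorbable when $k\asymp n$ and the target bound is only $e^{O(1)}$.
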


Let $n$, $\epsilon$, and $\vec{w}$ be as in \cref{thm:main}. Let $\tau$ be such that $\mb{P}[\sang{\vec{w}, \vec{\xi}} = \tau] = \rho(\vec{w})$, where $\vec{\xi}$ is a random vector with i.i.d.\ $\on{Ber}(1/2)$ components. Let 
\[B = \{\vec{\xi} \in \{0,1\}^{n} : \sang{\vec{w}, \vec{\xi}} = \tau\}.\]
In particular, $|B|\ge \exp(-\epsilon n)\cdot 2^{n}$.
Let $|\mc{R}(\vec{w})| = \exp(\delta n)$. For each $r \in \mc{R}(\vec{w})$, let $\vec{\xi}(r)$ be a fixed (but otherwise arbitrary) element of $\{0,1\}^{n}$ such that $\sang{\vec{w},\vec{\xi}(r)} = r$. Let
\[A =\{\vec{\xi}(r) \in \{0,1\}^{n}: r \in \mc{R}(\vec{w})\}.\]
Note that, by definition, for any distinct $\vec{a}_1, \vec{a}_2 \in A$, we have that $\sang{\vec{w}, \vec{a}_1} \neq \sang{\vec{w}, \vec{a}_2}$ and that $|A| = |\mc{R}(\vec{w})| = \exp(\delta n)$.

% We are now ready to prove \cref{thm:main}. We first define the crucial objects of study. We fix $\vec{w}\in\mb{R}^n$ satisfying the conditions of the theorem, which implies the existence of $\tau,\alpha\in\mb{R}$, and $A,B\subseteq\{\pm 1\}^n$ satisfying the following:
% \begin{itemize}
%     \item $\sang{\vec{w},\vec{a}_1}\neq\sang{\vec{w},\vec{a}_2}$ for distinct $\vec{a}_1,\vec{a}_2\in A$,
%     \item $\sang{\vec{w},\vec{b}} = \tau$ for all $\vec{b}\in B$, and
%     \item $|B|/2^n\ge \exp(-\epsilon n)$.
% \end{itemize}
We will make use of the simple, but crucial, observation from \cite{NPSW20} that $A$ and $k\cdot B$ have a full sumset for all $k\ge 1$. 
\begin{lemma}[{\cite[Lemma~4.2]{NPSW20}}]\label{lem:injective}
The map $(\vec{a},\vec{c})\mapsto\vec{a}+\vec{c}$ from $A\times(k\cdot B)$ to $A + k\cdot B$ is injective.
\end{lemma}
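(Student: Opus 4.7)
The plan is to exploit the two defining properties of $A$ and $B$: every $\vec{b}\in B$ satisfies $\langle \vec{w},\vec{b}\rangle=\tau$, while the elements of $A$ realize pairwise distinct values of the linear functional $\vec{x}\mapsto\langle \vec{w},\vec{x}\rangle$. By linearity, any $\vec{c}\in k\cdot B$, being a sum of $k$ elements of $B$, satisfies $\langle \vec{w},\vec{c}\rangle=k\tau$. Hence the functional $\langle \vec{w},\cdot\rangle$ is \emph{constant} on $k\cdot B$ and \emph{injective} on $A$.

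Given these two observations, the injectivity claim follows by inspection. Suppose that $(\vec{a}_1,\vec{c}_1),(\vec{a}_2,\vec{c}_2)\in A\times (k\cdot B)$ satisfy $\vec{a}_1+\vec{c}_1=\vec{a}_2+\vec{c}_2$. Taking the inner product of both sides with $\vec{w}$ and subtracting $k\tau$ (which equals $\langle \vec{w},\vec{c}_1\rangle=\langle \vec{w},\vec{c}_2\rangle$) yields $\langle \vec{w},\vec{a}_1\rangle=\langle \vec{w},\vec{a}_2\rangle$. The defining property of $A$ forces $\vec{a}_1=\vec{a}_2$, and cancellation in $\mb{R}^n$ then gives $\vec{c}_1=\vec{c}_2$.

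There is essentially no obstacle here: the whole lemma reduces to the observation that $A$ lies on distinct level sets of $\langle \vec{w},\cdot\rangle$ while $k\cdot B$ is contained in a single level set, so the sumset $A+k\cdot B$ decomposes uniquely according to its $A$-coordinate. This is exactly the structural setup engineered in \cref{sub:outline}, and it is the only place in the whole argument where the specific construction of $A$ and $B$ (rather than merely their sizes) is used.
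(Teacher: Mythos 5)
Your proof is correct and follows exactly the paper's argument: take the inner product with $\vec{w}$, use that $k\cdot B$ lies in the level set $\{\langle\vec{w},\cdot\rangle=k\tau\}$ to conclude $\langle\vec{w},\vec{a}_1\rangle=\langle\vec{w},\vec{a}_2\rangle$, hence $\vec{a}_1=\vec{a}_2$ by the defining property of $A$, and then cancel to get $\vec{c}_1=\vec{c}_2$. No differences worth noting.
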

\begin{proof}
Indeed, if $\vec{a}_{1} + (\vec{b}^{(1)}_1 + \dots + \vec{b}^{(1)}_k) = \vec{a}_{2} + (\vec{b}^{(2)}_1 + \dots + \vec{b}^{(2)}_k$), where $\vec{a}_i \in A$ and $\vec{b}^{(i)}_j \in B$, then taking the inner product of both sides with $\vec{w}$ and using $\sang{\vec{w}, \vec{b}} = \tau$ for all $b \in B$, we see that $\sang{\vec{w}, \vec{a}_1} = \sang{\vec{w}, \vec{a}_2}$, which implies that $\vec{a}_1 = \vec{a}_2$ by the definition of $A$. 
\end{proof}

We are now ready to prove \cref{thm:main}.

\begin{proof}[Proof of \cref{thm:main}]
Let $k\ge 2$ be a parameter which will be chosen later depending on $\epsilon$. We may assume $\epsilon\in(0,(2C_{\ref{lem:sup-ratio}})^{-2})$ by adjusting $C_{\ref{thm:main}}$ appropriately at the end to make larger values trivial. By \cref{lem:injective}, for each  $\vec{x}\in\{0,\ldots,k+1\}^n$ for which there exist $\vec{a} \in A$ and $\vec{c} \in k\cdot B$ with $\vec{a}+\vec{c} = \vec{x}$, there exists a unique such choice $\vec{a} = \vec{a}(\vec{x}) \in A$. (For $\vec{x}\notin A+k\cdot B$, we let $\vec{a}(\vec{x})$ be an arbitrary element of $A$.) 

Now, let $\vec{a}$ be uniform on $A$, let $\vec{b}_1,\ldots,\vec{b}_k$ be uniform on $B$, and let $\vec{v}_1,\ldots,\vec{v}_k$ be uniform on $\{0,1\}^n$. Let $C_i\subseteq\{0,\ldots,k+1\}^n$ be the set of vectors with $i$ coordinates equal to $k+1$. For $\vec{x}\in\{0,\ldots,k+1\}^n$, we let $\vec{x}^\ast \in \{0,\dots,k\}^{n}$ denote the vector obtained by setting every occurrence of $k+1$ in $\vec{x}$ to $k$. We have
\begin{align*}
1 &= \mb{P}[\vec{a}+\vec{b}_1+\cdots+\vec{b}_k\in\{0,\ldots,k+1\}^n]\\
&= \sum_{i=0}^n\sum_{\vec{x}\in C_i}\mb{P}[\vec{a}+\vec{b}_1+\cdots+\vec{b}_k = \vec{x}]\\
&\le \sum_{i=0}^n\sum_{\vec{x}\in C_i}\mb{P}[\vec{a} = \vec{a}(\vec{x})]\mb{P}[\vec{b}_1+\cdots+\vec{b}_k = \vec{x}-\vec{a}(\vec{x})]\\
&\le\frac{1}{|A|}\sum_{i=0}^n\sum_{\vec{x}\in C_i}\bigg(\frac{2^n}{|B|}\bigg)^k\mb{P}[\vec{v}_1+\cdots+\vec{v}_k = \vec{x}-\vec{a}(\vec{x})]\\
&\le\frac{e^{k\epsilon n}}{|A|}\sum_{i=0}^n\sum_{\vec{x}\in C_i}\mb{P}[\vec{v}_1+\cdots+\vec{v}_k = \vec{x}^\ast]\sup_{\vec{a}\in A}\frac{\mb{P}[\vec{v}_1+\cdots+\vec{v}_k = \vec{x}-\vec{a}]}{\mb{P}[\vec{v}_1+\cdots+\vec{v}_k = \vec{x}^\ast]}\\
&= \frac{e^{k\epsilon n}}{|A|}\sum_{i=0}^n(1/2^k)^i\sum_{S\in\binom{[n]}{i}}\mb{E}_{\vec{x}\sim\on{Bin}(k)^{\otimes ([n]\setminus S)}\times\{k+1\}^{S}}\bigg[\sup_{\vec{a}\in A}\frac{\mb{P}[\vec{v}_1+\cdots+\vec{v}_k = \vec{x}-\vec{a}]}{\mb{P}[\vec{v}_1+\cdots+\vec{v}_k = \vec{x}^\ast]}\bigg].
\end{align*}
Let $A_S$ be the set of elements in $A \subseteq \{0,1\}^{n}$ whose support contains $S$. Let
\[A'_S = \{\vec{a}' \in \{0,1\}^{[n]\setminus S}: \exists \vec{a} \in A_S \text{ with }\vec{a}|_{[n]\setminus S} = \vec{a}'\}.\]
Recall that $|A| = \exp(\delta n)$. Abusing notation so that the supremum of an empty set is $0$, we can continue the above chain of inequalities to get that
\begin{align*}
1&\le\frac{e^{k\epsilon n}}{|A|}\sum_{i=0}^n(1/2^k)^i\sum_{S\in\binom{[n]}{i}}\mb{E}_{\vec{x}\sim\on{Bin}(k)^{\otimes([n]\setminus S)}\times\{k+1\}^{S}}\bigg[\sup_{\vec{a}\in A}\frac{\mb{P}[\vec{v}_1+\cdots+\vec{v}_k = \vec{x}-\vec{a}]}{\mb{P}[\vec{v}_1+\cdots+\vec{v}_k = \vec{x}^\ast]}\bigg]\\
&= \frac{e^{k\epsilon n}}{|A|}\sum_{i=0}^n(1/2^k)^i\sum_{S\in\binom{[n]}{i}}\mb{E}_{\vec{x}\sim\on{Bin}(k)^{\otimes([n]\setminus S)}}\bigg[\sup_{\vec{a}\in A_S'}\frac{\mb{P}[\vec{v}_1+\cdots+\vec{v}_k = (\vec{x}-\vec{a})\times \{k\}^{S}]}{\mb{P}[\vec{v}_1+\cdots+\vec{v}_k = \vec{x} \times \{k\}^{S}]}\bigg]\\
&= \frac{e^{k\epsilon n}}{|A|}\sum_{i=0}^n(1/2^k)^i\sum_{S\in\binom{[n]}{i}}\mb{E}_{\vec{x}\sim\on{Bin}(k)^{\otimes([n]\setminus S)}}\bigg[\sup_{\vec{a}\in A_S'}\frac{\mb{P}[(\vec{v}_1+\cdots+\vec{v}_k)|_{[n]\setminus S} = \vec{x}-\vec{a}]}{\mb{P}[(\vec{v}_1+\cdots+\vec{v}_k)|_{[n]\setminus S} = \vec{x}]}\bigg]\\
&\le \frac{e^{k\epsilon n}}{|A|}\left(\sum_{i=0}^{n/2}\cdot + \sum_{i=n/2}^{n}2^{-ki}\cdot 2^{n}\cdot \left(\max_{\ell}\frac{\max\Big\{\binom{k}{\ell- 1},\binom{k}{\ell}\Big\}}{\binom{k}{\ell}}\right)^{n-i}\right)\\
&\le \frac{e^{k\epsilon n}}{|A|}\left(\sum_{i=0}^{n/2}\cdot + n\cdot 2^{-kn/2}\cdot 2^{n}\cdot k^{n}\right)\\
&\le\frac{e^{k\epsilon n}}{|A|}\bigg(\sum_{i=0}^{n/2}\binom{n}{i}2^{-ki}\exp(C_{\ref{lem:sup-ratio}}(k^{-1}+(2\delta)^{1/2}k^{-1/2})(n/2)) + 2^{-kn/4}\bigg)\\
&\le\exp(-\delta n)\exp\left(O(k\epsilon + k^{-1} + \delta^{1/2}k^{-1/2})n\right)
\end{align*}
by \cref{lem:sup-ratio} applied to $A_S$, as long as $n/2\ge k\ge C_{\ref{lem:sup-ratio}} \ge 20$. To deduce the last line, note that $\binom{n}{i}2^{-ki}\le (2^{-k}en/i)^i$, so for $i\ge\lceil en/2^{k-1}\rceil$ the sum of weighted binomials is bounded by a geometric series. Additionally, for $1\le i\le en/2^k$, if this interval is nonempty, the sum of binomials is certainly bounded by $\exp(O(k^{-1}n))$.

Hence, the above inequality yields
\[\delta\le C(k\epsilon + k^{-1}+ \delta^{1/2}k^{-1/2})\]
for some absolute constant $C > 0$. Now letting $k = \epsilon^{-1/2}/2$ (note that this satisfies $2C_{\ref{lem:sup-ratio}}\le 2k = \epsilon^{-1/2} \le n$), we find that
\[\delta = O(\epsilon^{1/2}),\]
 as desired.
\end{proof}

The proof of \cref{lem:sup-ratio} relies on the following preliminary estimate. 
\begin{lemma}\label{lem:initial-bound}
If $1\le s\le k/(16\pi)$, then \[\mb{E}_{x\sim\on{Bin}(k)}\bigg(\frac{x}{k+1-x}\bigg)^s\le\exp(10\pi s^2/k)+2k^s(4/5)^k.\]
\end{lemma}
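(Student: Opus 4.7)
My plan rests on the algebraic identity
\[
\binom{k}{j}\cdot\frac{j^{\underline{s}}}{(k+1-j)^{\overline{s}}}=\binom{k}{j-s},
\]
where $j^{\underline{s}}=j(j-1)\cdots(j-s+1)$ and $(k+1-j)^{\overline{s}}=(k+1-j)(k+2-j)\cdots(k+s-j)$. Summing over $j$ immediately gives $\mb{E}_{x\sim\on{Bin}(k)}[x^{\underline{s}}/(k+1-x)^{\overline{s}}]\le 1$, which will serve as a normalized baseline. For $x\ge s$ the target factorizes as
\[
\bigg(\frac{x}{k+1-x}\bigg)^{s}=R(x)\cdot\frac{x^{\underline{s}}}{(k+1-x)^{\overline{s}}},\qquad R(x):=\prod_{i=1}^{s-1}\frac{x}{x-i}\cdot\bigg(1+\frac{i}{k+1-x}\bigg),
\]
so the strategy is to cut the expectation into ranges so that on the ``bulk'' $R(x)$ is uniformly bounded.

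I would split $\mb{E}[(x/(k+1-x))^{s}]$ into a lower tail $x\le k/4$, a bulk $k/4<x\le 5k/6$, and an upper tail $x>5k/6$. On the lower tail, $x/(k+1-x)\le 1/3$ so $(x/(k+1-x))^{s}\le 1$, and Hoeffding gives $\mb{P}[x\le k/4]\le e^{-k/8}$; this contribution is negligible. On the upper tail, $(x/(k+1-x))^{s}\le k^{s}$ trivially, and the binary KL bound $\mb{P}[x>5k/6]\le\exp(-k D_{\mr{KL}}(5/6\|1/2))\le(4/5)^{k}$ (since $D_{\mr{KL}}(5/6\|1/2)=\tfrac56\log\tfrac53+\tfrac16\log\tfrac13>\log(5/4)$) gives at most $k^{s}(4/5)^{k}$. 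On the bulk, using $-\log(1-y)\le y/(1-y)$ and $\log(1+y)\le y$,
\[
\log R(x)\le\sum_{i=1}^{s-1}\bigg(\frac{i}{x-i}+\frac{i}{k+1-x}\bigg)\le\frac{s^{2}}{2}\bigg(\frac{1}{x-s+1}+\frac{1}{k+1-x}\bigg),
\]
which is $O(s^{2}/k)$ because the hypothesis $s\le k/(16\pi)$ ensures both $x-s+1\ge 23k/100$ and $k+1-x\ge k/6$ on the bulk. Combined with the normalized identity this yields a bulk contribution $\le\exp(Cs^{2}/k)$ for an absolute constant $C$ (concretely one can take $C\le 6$, well under $10\pi$).

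Summing the three pieces gives a bound of the form $\exp(Cs^{2}/k)+e^{-k/8}+k^{s}(4/5)^{k}$, and the substantial gap between $C$ and $10\pi$ (together with the extra factor $2$ in front of $k^{s}(4/5)^{k}$) comfortably absorbs the residual $e^{-k/8}$ term, using $s\ge 1$ and $k\ge 16\pi$ to guarantee $\exp(10\pi s^{2}/k)-\exp(Cs^{2}/k)\ge e^{-k/8}$. The main obstacle is obtaining the uniform bulk control on $R(x)$: a naive product bound $\prod x/(x-i)\le 2^{s}$ is useless, and the logarithmic expansion only gives $O(s^{2}/k)$ when both $x-s+1$ and $k+1-x$ remain $\Omega(k)$. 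This is precisely what the bulk thresholds $k/4$ and $5k/6$, combined with the hypothesis $s\le k/(16\pi)$, deliver; the constant $10\pi$ appearing in the statement is deliberately loose.
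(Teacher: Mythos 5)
Your proposal has one genuine gap: every central device in it --- the identity $\binom{k}{j}\,j^{\underline{s}}/(k+1-j)^{\overline{s}}=\binom{k}{j-s}$, the factorization $(x/(k+1-x))^{s}=R(x)\,x^{\underline{s}}/(k+1-x)^{\overline{s}}$, and the product formula defining $R(x)$ --- is only meaningful when $s$ is a positive integer, whereas the lemma is stated for all real $1\le s\le k/(16\pi)$ and is invoked in the proof of \cref{lem:sup-ratio} with the non-integer choices $s=kt/(24\pi)$ and $s=k/(24\pi\log k)$. As written, your argument proves an integer-only statement, which is not the statement being used. The gap is plausibly repairable via convexity of $s\mapsto\log\mb{E}[Z^{s}]$ (interpolating between $\lfloor s\rfloor$ and $\lceil s\rceil$), but not for free with your constants: Lyapunov's inequality with exponent $s/\lceil s\rceil$ replaces $(4/5)^{k}$ by $(4/5)^{ks/\lceil s\rceil}$, and H\"older between consecutive integers leaves an extra factor of roughly $k\,e^{Cs^{2}/k}$ in front of $k^{s}(4/5)^{k}$; either way the stated tail term is missed, so you would need, e.g., to push the upper-tail cutoff from $5k/6$ closer to $k$ (which still keeps the bulk constant well below $10\pi$) and redo the bookkeeping, and also note that $\lceil s\rceil$ may slightly exceed $k/(16\pi)$, which is harmless in the bulk estimate but must be said.

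For integer $s$, your argument does look correct, and it is a genuinely different, more elementary route than the paper's: summing your identity gives $\mb{E}[x^{\underline{s}}/(k+1-x)^{\overline{s}}]\le 1$; on the bulk $k/4<x\le 5k/6$ one indeed has $x\ge s$ and $\log R(x)\le\tfrac{s^{2}}{2}\big(\tfrac{1}{x-s+1}+\tfrac{1}{k+1-x}\big)\le 6s^{2}/k$ using $s\le k/(16\pi)$; the Hoeffding and Kullback--Leibler tail bounds are fine (indeed $D_{\mr{KL}}(5/6\,\|\,1/2)\approx 0.243>\log(5/4)$); and the absorption of $e^{-k/8}$ works since $e^{10\pi s^{2}/k}-e^{6s^{2}/k}\ge(10\pi-6)/k\ge e^{-k/8}$ for all admissible $k$. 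The paper instead centers $x$ at $k/2$, truncates at $|x-k/2|\le k/3$, and bounds the resulting exponential moment by coupling the binomial with the signs of i.i.d.\ Gaussians and applying Jensen to compare with a Gaussian moment generating function; that route computes with real $s$ from the start, which is precisely the point where your approach needs the additional interpolation step.
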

\begin{proof}
We let $x\sim\on{Bin}(k)$ and $y = x-k/2\sim\on{Bin}(k)-k/2$ throughout. We let $z\sim\mc{N}(0,k\pi/8)$. 
%Note that there is a coupling of $y,z$ so that $y = z + z'$, where $\mb{E}[z'|z] = 0$. 
We have
\begin{align*}
\mb{E}_{x\sim\on{Bin}(k)}\bigg[\bigg(\frac{x}{k+1-x}\bigg)^s\bigg]&=\mb{E}_y\bigg[\bigg(1 + \frac{2y-1}{k/2+1-y}\bigg)^s\bigg]\\
&\le \mb{E}_y\bigg[\bigg(1+\frac{2y}{k/2+1-y}\bigg)^s\mbm{1}_{|y|\le k/3}\bigg] + k^s\mb{P}[|y|\ge k/3]\\
&\le\mb{E}_y\bigg[\bigg(1+\frac{2y}{k/2+1-y}\bigg)^s\mbm{1}_{|y|\le k/3}\bigg] + 2k^s(4/5)^k.
\end{align*}
Note that the probability estimate for $\mb{P}[\mbm{1}_{|y|\ge k/3}]$ follows from the sharp (entropy) version of the Chernoff-Hoeffding theorem. Since for $|y| \le k/3$,
\[\frac{2y}{(k/2 + 1-y)} \le \frac{2y}{k/2+1} + \frac{8y^{2}}{(k/2+1)^{2}},\]
and using $(1+x) \le \exp(x)$, we can continue the previous inequality as
\begin{align*}
\mb{E}_{x\sim\on{Bin}(k)}\bigg[\bigg(\frac{x}{k+1-x}\bigg)^s\bigg]&\le\mb{E}_y\bigg[\bigg(1+\frac{2y}{k/2+1}+\frac{8y^2}{(k/2+1)^2}\bigg)^s\mbm{1}_{|y|\le k/3}\bigg] + 2k^s(4/5)^k\\
&\le\mb{E}_y\bigg[\exp\bigg(\frac{4sy}{k+2}+\frac{32sy^2}{k^2}\bigg)\bigg] + 2k^s(4/5)^k.
\end{align*}
Now, let $z_1,\dots,z_k$ be i.i.d.\ $\mc{N}(0,1)$ random variables. Then,
\[y \sim \frac{1}{2}\left(\on{sgn}{z_1} + \dots + \on{sgn}{z_k}\right).\]
Moreover, for any $-k \le \ell \le k$,
\[\mb{E}[z_1 + \dots + z_k \mid \on{sgn}(z_1) + \dots + \on{sgn}(z_k) = \ell] = \sqrt{\frac{2}{\pi}}\ell.\]
In particular, under this coupling of $y,z_1,\dots,z_k$, we have
\[\mb{E}[z_1+\dots + z_k \mid y] = \sqrt{\frac{8}{\pi}}y.\]
Let $z = z_1 + \dots +z_k$, so that $z \sim \mc{N}(0,k)$.
Then, by the convexity of
\[f(y)= \exp\bigg(\frac{4sy}{k+2}+\frac{32sy}{k^2}\bigg)\]
and using Jensen's inequality, we have
\begin{align*}
\mb{E}_{y}f(y)
&= \mb{E}_{y, z_1, \dots, z_k}f(y)\\
&= \mb{E}_{y,z_1,\dots,z_k} f\left(\sqrt{\frac{\pi}{8}}\mb{E}[z \mid y]\right)\\
&\le \mb{E}_{z}f(\sqrt{\pi}z/\sqrt{8})\\
%=\mb{E}_z\exp\bigg(\frac{4sz}{k+2}+\frac{32sz^2}{k^2}\bigg)\\
&= \mb{E}_{w\sim\mc{N}(0,1)}\exp\bigg(\frac{s\sqrt{2k\pi}}{k+2}w+\frac{4s\pi}{k}w^2\bigg)\\
&= \bigg(1-\frac{8\pi s}{k}\bigg)^{-1/2}\exp\bigg(\frac{\pi s^2k^2}{(k+2)^2(k-8\pi s)}\bigg)\\
&\le\exp\bigg(\frac{8\pi s}{k} + \frac{2\pi s^2}{k}\bigg) \\
&\le\exp(10\pi s^2/k).\qedhere
\end{align*}
\end{proof}

Finally, we can prove \cref{lem:sup-ratio}
\begin{proof}[Proof of \cref{lem:sup-ratio}]
We may assume that $\delta \ge 2000/k$ since the statement for $\delta < 2000/k$ follows from the statement for $\delta = 2000/k$. Also, note that we may assume that $\delta \le \log2$.
For any $t \in \mb{R}$, we have
\begin{align*}
\mb{P}_{\vec{x}}\bigg[\sup_{\vec{a}\in A}\frac{\mb{P}_{\vec{b}}[\vec{b} = \vec{x}-\vec{a}]}{\mb{P}_{\vec{b}}[\vec{b} = \vec{x}]}\ge e^{tn}\bigg]&\le |A|\sup_{\vec{a}\in A}\mb{P}_{\vec{x}}\bigg[\frac{\mb{P}_{\vec{b}}[\vec{b} = \vec{x}-\vec{a}]}{\mb{P}_{\vec{b}}[\vec{b} = \vec{x}]}\ge e^{tn}\bigg]\\
&\le |A|\sup_{\vec{a}\in A}\inf_{s\ge 2}\exp(-stn)\mb{E}_{\vec{x}}\bigg[\bigg(\frac{\mb{P}_{\vec{b}}[\vec{b} = \vec{x}-\vec{a}]}{\mb{P}_{\vec{b}}[\vec{b} = \vec{x}]}\bigg)^s\bigg]\\
&= |A|\sup_{\vec{a}\in A}\inf_{s\ge 2}\exp(-stn)\prod_{i=1}^n\mb{E}_{x\sim\on{Bin}(k)}\bigg[\bigg(\frac{\mb{P}[\on{Bin}(k) = x-a_i]}{\mb{P}[\on{Bin}(k) = x]}\bigg)^s\bigg]\\
&\le |A|\inf_{s\ge 2}\exp(-stn)\bigg(\mb{E}_{x\sim\on{Bin}(k)}\bigg(\frac{x}{k+1-x}\bigg)^s\bigg)^n.
\end{align*}
In the last line, we have used that
\begin{align*}
    \mb{E}_{x\sim\on{Bin}(k)}\bigg[\bigg(\frac{x}{k+1-x}\bigg)^s\bigg]
    &\ge \bigg(\mb{E}_{x\sim\on{Bin}(k)}\bigg[\frac{x^2}{(k+1-x)^2}\bigg]\bigg)^{s/2}\\
    &= \bigg(\sum_{\ell = 0}^{k-1}\frac{\ell+1}{k-\ell}\binom{k}{\ell}2^{-k}\bigg)^{s/2}\\
    &= \bigg(\sum_{\ell = 0}^{k-1}\bigg(\frac{k+2}{k} + \frac{4(k+1)(\ell-k/2)}{k^2} + \frac{(k+1)(k-2\ell)^2}{k^2(k-\ell)}\bigg)\binom{k}{\ell}2^{-k}\bigg)^{s/2}\\
    &\ge \bigg(\sum_{\ell = 0}^{k-1}\bigg(\frac{k+2}{k} + \frac{4(k+1)(\ell-k/2)}{k^2}\bigg)\binom{k}{\ell}2^{-k}\bigg)^{s/2}\\
    &=\bigg(\frac{k+2}{k}-\frac{3k+4}{k}2^{-k}\bigg)^{s/2}\\
    &\ge 1\\
\end{align*}
if $k\ge 3$.
Therefore, by \cref{lem:initial-bound}, we have
\begin{align*}
\mb{P}_{\vec{x}}\bigg[\sup_{\vec{a}\in A}\frac{\mb{P}_{\vec{b}}[\vec{b} = \vec{x}-\vec{a}]}{\mb{P}_{\vec{b}}[\vec{b} = \vec{x}]}\ge e^{tn}\bigg]&\le |A|\inf_{s\ge 2}\exp(-stn)\bigg(\mb{E}_{x\sim\on{Bin}(k)}\bigg(\frac{x}{k+1-x}\bigg)^s\bigg)^n\\
&\le |A|\inf_{2\le s\le k/(16\pi)}\exp(-stn)\bigg(\exp(10\pi s^2/k)+2k^s(4/5)^k\bigg)^n\\
&\le |A|\inf_{2\le s\le k/(10\log k)}\exp(-stn)\bigg(\exp(12\pi s^2/k)\bigg)^n\\
& \le 
\begin{cases}
|A|\exp\left(-\frac{kt^{2}n}{48\pi}\right) &\quad \text{ if } \sqrt{\frac{96\pi\delta}{k}}\le t\le (\log{k})^{-1}\\
|A|\exp\left(-\frac{kn}{48\pi (\log{k})^{2}}\right) &\quad \text{ if } (\log{k})^{-1}\le t \le \log{k}.
\end{cases}
\end{align*}
Here, the second case follows by plugging in $s = k/(24\pi\log{k})$ and simplifying (assuming $C_{\ref{lem:sup-ratio}}$ is large enough so $s\ge 2$), and the first case follows from plugging in $s = kt/(24\pi)$ which satisfies $2 \le s \le k/(10\log{k})$ by the restriction on $t$ and $\delta$.
%If $48\pi/k\le t\le 1/\log k$, the infimum is achieved at $s = kt/(24\pi)$ and we obtain
%\[\mb{P}_{\vec{x}}\bigg[\sup_{\vec{a}\in A}\frac{\mb{P}_{\vec{b}}[\vec{b} = \vec{x}-\vec{a}]}{\mb{P}_{\vec{b}}[\vec{b} = \vec{x}]}\ge e^{tn}\bigg]\le |A|\exp\bigg(-\frac{kt^2n}{48\pi}\bigg).\]
%Now let $|A|\le\exp(\delta n)$, where $392\pi/k\le\delta\le\log 2$ (as clearly the statement allows such an alteration of $\delta$). 
Finally, since
\[0\le \sup_{\vec{a}\in A}\frac{\mb{P}_{\vec{b}}[\vec{b} = \vec{x}-\vec{a}]}{\mb{P}_{\vec{b}}[\vec{b} = \vec{x}]} \le \left(\max_{\ell}\frac{\max\Big\{\binom{k}{\ell- 1},\binom{k}{\ell}\Big\}}{\binom{k}{\ell}}\right)^{n} \le k^{n},\]
we have
\begin{align*}
\mb{E}_{\vec{x}}\bigg[\sup_{\vec{a}\in A}\frac{\mb{P}_{\vec{b}}[\vec{b} = \vec{x}-\vec{a}]}{\mb{P}_{\vec{b}}[\vec{b} = \vec{x}]}\bigg] 
&= \int_{-\infty}^{\log k}\mb{P}\bigg[\sup_{\vec{a}\in A}\frac{\mb{P}_{\vec{b}}[\vec{b} = \vec{x}-\vec{a}]}{\mb{P}_{\vec{b}}[\vec{b} = \vec{x}]}\ge e^{tn}\bigg]ne^{tn}dt\\
&\le \int_{1/\log{k}}^{\log{k}}\cdot + \int_{\sqrt{96\pi \delta/k}}^{1/\log{k}}\cdot + \int_{-\infty}^{\sqrt{96\pi\delta/k}}ne^{tn}dt\\
&\le e^{\sqrt{96\pi\delta/k}n} + \int_{\sqrt{96\pi\delta/k}}^{1/\log k}|A|\exp\bigg(-\frac{kt^2n}{48\pi}\bigg)ne^{tn}dt\\
&\quad+ \int_{1/\log k}^{\log k}|A|\exp\bigg(-\frac{kn}{48\pi(\log k)^2}\bigg)ne^{tn}dt\\
&\le\exp\left(O(\sqrt{\delta/k})n\right) + \int_{\sqrt{96\pi\delta/k}}^{1/\log{k}} ne^{-tn}dt + 1\\
&\le\exp\left(O(\sqrt{\delta/k})n\right).\qedhere
\end{align*}
\end{proof}

%%% AUTHOR: optional acknowledgments here
\section*{Acknowledgments} %%  you may comment this out if no Ackno
The authors are grateful to the anonymous reviewers for several suggestions which helped improve the presentation of the paper.

%%% AUTHOR:
%%% Bibliography goes here. Note that the arXiv cannot process bibtex
%%% or biber bibliographies.  Example of acceptable bibliograpy format:
\bibliographystyle{amsplain}
\bibliography{main.bib}
%% AUTHOR: You can generate such a bibliography from a .bib file by 
%% running pdflatex/bibtex/pdflatex/pdflatex and then pasting the .bbl file
%% between \begin{thebibliography} and \end{bibliography}

%%% AUTHOR: Include a short description of each author following the
%%% structure below. Use the same short tags used previously.  
%%% Use \imageat{} and \imagedot{} instead of "@" and "." in
%%% email addresses-this replaces the symbols with graphics to avoid 
%%% e-mail address harvesting from the .pdf file

\begin{aicauthors}
\begin{authorinfo}[vj]
  Vishesh Jain\\
  Department of Statistics, Stanford University\\
  Stanford, California\\
  visheshj\imageat{}stanford\imagedot{}edu \\
  \url{https://jainvishesh.github.io}
\end{authorinfo}
\begin{authorinfo}[as]
  Ashwin Sah\\
  Department of Mathematics, Massachusetts Institute of Technology\\
  Cambridge, Massachusetts\\
  asah\imageat{}mit\imagedot{}edu \\
  \url{http://www.mit.edu/~asah/}
\end{authorinfo}
\begin{authorinfo}[ms]
  Mehtaab Sawhney\\
  Department of Mathematics, Massachusetts Institute of Technology\\
  Cambridge, Massachusetts\\
  msawhney\imageat{}mit\imagedot{}edu \\
  \url{http://www.mit.edu/~msawhney/}
\end{authorinfo}
\end{aicauthors}

\end{document}